\title{\LARGE\bf Target Defense against Periodically Arriving Intruders}
\author{Arman Pourghorban and Dipankar Maity
\thanks{The authors are with the Department of Electrical and Computer Engineering at The University of North Carolina at Charlotte, NC, 28223, USA. 
Emails: 
{\tt apourgho@uncc.edu, dmaity@uncc.edu}
}%
\thanks{
This research is supported by the ARL grant ARL DCIST CRA W911NF-17-2-0181
}
}
\newcommand{\re}{\color{red}}
\newcommand{\rt}{\rho_{_{\rm T}}}
\newcommand{\ra}{\rho_{_{\rm A}}}
\newcommand{\xd}{\mathbf{x}_{\rm D}}
\newcommand{\xa}{\mathbf{x}_{\rm A}}
\newcommand{\xc}{\mathbf{x}_{\rm C}}
\newcommand{\thetae}{\theta_{\rm eng}}
\newcommand{\taue}{t_{\rm eng}}
\newcommand{\phie}{\phi_{\rm eng}}
\newcommand{\reng}{r_{\rm eng}}
\newcommand{\Se}{S_{\rm engage}}
\newcommand{\R}{\mathbb{R}^2}
\newcommand{\uvec}{\hat{\mathbf{u}}}
\newcommand{\x}{\textbf{x}}
\newcommand{\ro}{r_{_{\rm T}}}
\newcommand{\po}{p_{_{\Omega}}}
\newtheorem{lemma}{Lemma}
\newtheorem{definition}{Definition}
\newtheorem{remark}{Remark}
\begin{document}

\maketitle
\thispagestyle{empty}
\pagestyle{empty}
\begin{abstract}
We consider a variant of pursuit-evasion games where a single  defender is tasked to defend a static target from a sequence of periodically arriving intruders. The intruders' objective is to breach the boundary of a circular target without being captured and the defender's objective is to capture as many intruders as possible. 
At the beginning of each period, a new intruder appears at a random location on the perimeter of a fixed circle surrounding the target and moves radially towards the target center to breach the target.
The intruders are slower in speed compared to the defender and they have their own sensing footprint through which they can perfectly detect the defender if it is within their sensing range. 
 Considering the speed and sensing limitations of the agents, we analyze the entire game by dividing it into  \textit{partial information} and \textit{full information phases}.
 We address the defender's \textit{capturability} using the notions of \textit{engagement surface} and  \textit{capture circle}.
 We develop and analyze three efficient strategies for the defender and derive a lower bound on the capture fraction. 
Finally, we conduct a series of simulations and numerical experiments to compare and contrast the three proposed approaches.\\
\end{abstract}

\begin{keywords}
\small 
Target-defense, pursuit-evasion, partial information games, multi-agent systems.
\end{keywords}

\section{Introduction}
In this paper, we study a circular target-defence game between
a single defender and a team of sequentially arriving intruders.
This problem has been found useful in multi-robot applications such as
patrolling \cite{yan2016multi}, area monitoring \cite{robotics9020047}, area securing \cite{9632813}, coastline defense \cite{inproceedings} and has motivated a great amount of research e.g., see \cite{shishika2020review} for a review on this topic.
\par
Since the formulation of the Pursuit-Evasion Game (PEG)  \cite{isaacs1999differential}, several variations of it have been proposed in the last few decades that directly address target/area/perimeter defense type applications.
For example, reach-avoid games are a variant of PEGs where a group of agents attempt to reach a target while avoiding some
adversarial circumstances generated by the opponent group \cite{6426643,Fisac,garcia2020optimal}. 
 Likewise, perimeter-defence problems are another variant of PEGs wherein the defender team is tasked to capture the intruders before the latter breach the target perimeter. 
 Target-defence games, which are of particular relevance for this paper, have been studied by  
 considering the scenario where the defenders' movements are constrained on the perimeter of the target \cite{9620572,https://doi.org/10.48550/arxiv.2109.02852} and also where the defenders can move freely in the environment \cite{bajaj2021competitive},\cite{8279644}.
 In this work we consider the second scenario where the defenders can move freely in the environment. 
 
  Of particular relevance to this paper is the work in \cite{bajaj2019dynamic} that considered a problem where the intruders appeared  randomly according to a Poisson arrival process at a fixed distance from the target perimeter.
  Several algorithms for capturing the intruders are discussed and some performance bounds for the capture fraction are derived. 
  However, this work did not consider any sensing capability for the intruders and therefore, the intruders are restricted in terms of the evasive strategies they can deploy. 
  Similarly, in \cite{adler2022role} and \cite{macharet2020adaptive} among many others, the intruders attempting to breach a target perimeter are tasked with simple and fixed a priori strategies. 
  These works also do not consider the sensing capability which enables the intruders to evade from the defender instead of just being captured on their direct path toward the target. 
  To the best of our knowledge, some of the earliest works on similar problems have been considered in \cite{Bertsimas} where a vehicle (equivalent to our defender) is tasked to serve multiple randomly arriving customers (equivalent of our intruders) in an optimal fashion. 
  In contrast these works, we show in our earlier work \cite{arman} that the sensing capability and the adversarial nature of the arriving intruders
result in a particular type of evasive maneuver where the
intruders can force the defender to pursue them and thus, the
defender ends up capturing the intruders at locations which
are advantageous for the next intruders to increase their target
breaching probability.
Sensing-enabled motion tactics in the context of target defense games is one of the key features of this paper and our earlier works \cite{arman, 9561995}.
  
  In \cite{arman}, the arrival of the intruders were assumed to be sequential, i.e., the next intruder appears randomly only after the current intruder breaches the target perimeter or gets captured. 
  As a consequence, the defender had only one intruder to engage with at any given time. 
 In this paper, we extend the work of \cite{arman} and consider periodically incoming intruders. 
 This way, depending on the period of arrival, there might be multiple intruders available to engage with and the defender needs to decide which intruder to engage with while considering their sensing capabilities and the associated potential evasive strategies deployed by these intruders. 
  Similar to \cite{arman}, we consider a partial information game framework where, due to the sensing limitations, the agents do not have each other's information all the time and they must consider trade off between sensing advantage and positional advantage while choosing their strategies. 
The main contributions of this paper are: (i) We analyze a sensing limited perimeter-defence game against periodically incoming intruders.
The intruders' entry points are considered to be random.
We analyze both finite and infinite time (asymptotic) performances of the game. 
(ii) We derive the strategies for all the agents and the discuss how to determine the \textit{capturability} of an intruder using the concepts of \textit{engagement surface} and \textit{capture circle}. (iii) Based on the notion of \textit{capturability}, we introduce three strategies for the defender to determine which intruder(s) to purse and in what manner (i.e., how to delay being sensed by that intruder to gain positional and informational advantages), (iv) We theoretically derive a lower bound for one of our proposed strategies.
 (v) We numerically validate (using Monte-Carlo type random trials of experiments) the theoretically found lower bound and compare it with the true performances of the three proposed strategies for a wide range of parameters.
\par
The rest of the paper is organized as follows: In Section~\ref{sec:Preliminaries}, we formulate our problem, discuss parametric assumptions, and provide some useful definitions and necessary background materials. 
The two phases (\textit{partial} and \textit{full information}) are discussed in Sections~\ref{sec:partInfo} and \ref{sec:fullInfo}, respectively. 
We analyze the whole game and design algorithms for the defender's strategies in Section~\ref{sec:GameAnalysis}.
In section~\ref{sec:Lowebound}, we analyze one of the proposed defender strategies and compute a lower bound on this strategy's performance. 
 Simulations and numerical results are discussed  in Section~\ref{sec:Simu}. 
 The developed algorithms were deployed in Robotics Operating Systems (ROS) to generate experimental results.
We conclude the paper in Section~\ref{sec:Conclusion}. 
\par
\textit{Notation:} 
All vectors are denoted with lowercase bold symbols, e.g., $\mathbf{x}$. 
$\uvec(\theta)$ denotes the unit vector $[\cos\theta,~\sin\theta]^\intercal$.
\section{Preliminaries} \label{sec:Preliminaries}
\subsection{Problem Formulation}
\label{sec:ProbFormulation}
\begin{figure}
    \centering
        \includegraphics[ width = 0.075 \textwidth]{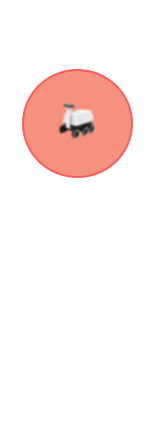}
    \includegraphics[ width = 0.28 \textwidth]{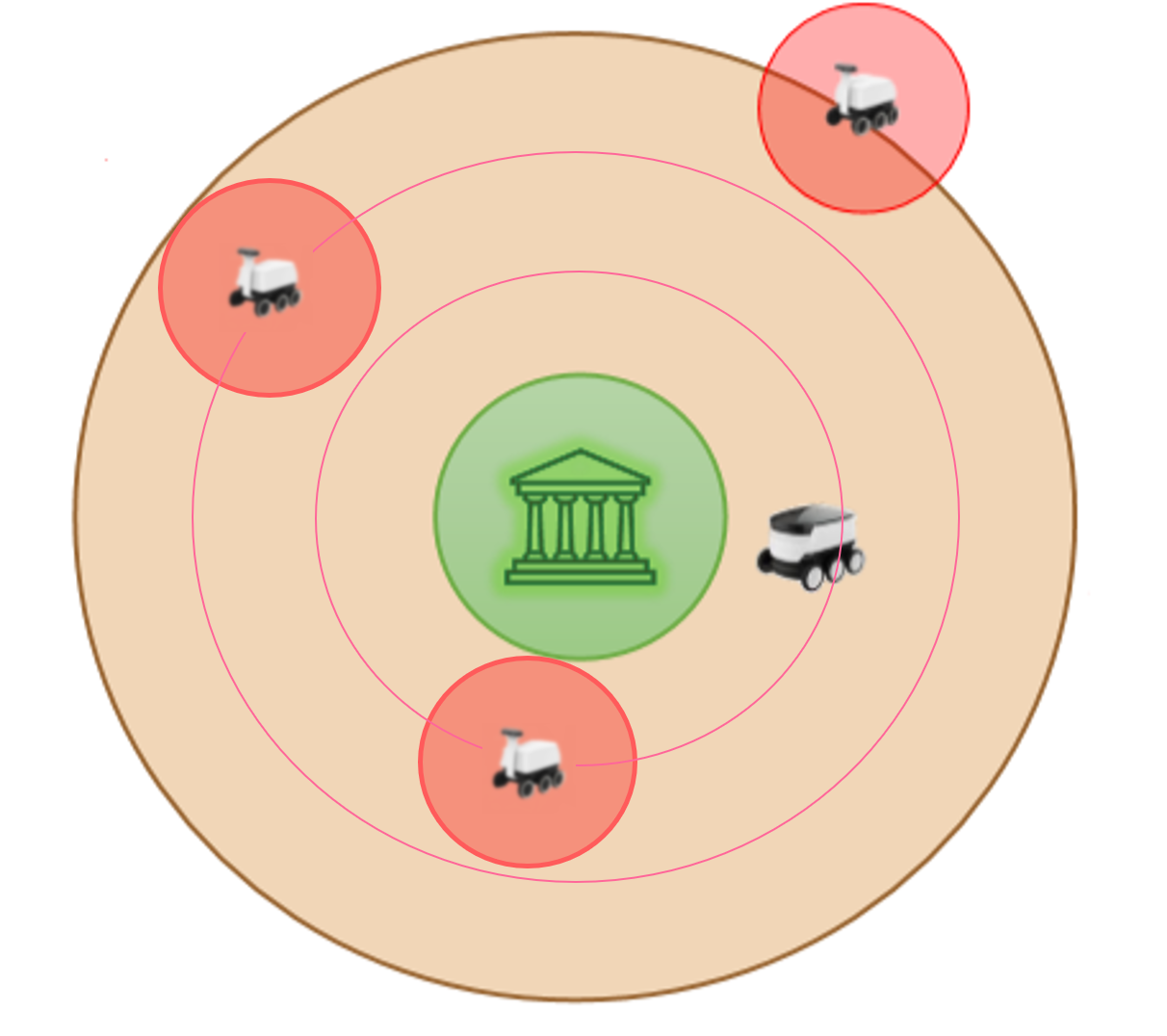}
    \put (-10, 55 )  {defender}
    \put (-31, 55 ) {$\longleftarrow$}
    \put (0, 109 )  {intruder}
    \put (-20, 109 ) {$\longleftarrow$}
       \put (-132, 62 ) {\line(1,0) {43}}
    \put (-113, 67) {$\rt$}
              \put (-160, 77 ) 
           {\textcolor{black}{{\line(1,0) {10}}}}
 \put (-147, 77) {$\ra$}
           \put (-70, 62 ) 
           {\textcolor{black}{{\line(1,0) {17}}}}
 \footnotesize{\put (-62, 67) {$\ro$}}

           \caption{\colorbox{green!30}{\textcolor{green!30}{g}}: Target region of radius $\ro$, \colorbox{red!20}{\textcolor{red!20}{g}}: intruders sensing region of radius $\ra$, \colorbox{orange!60!black!25}{\textcolor{orange!60!black!25}{g}}: Target Sensing Region (TSR) of radius $\rt$, \textcolor{orange!50!black!50}{\textbf{\Large --}} TSR Boundary.}
    \label{fig:introPic}
    \vspace{-12 pt}
\end{figure}
We consider a target guarding problem in ${\mathbb{R}^2}$ where a periodically arriving sequence of intruders is tasked to breach a circular target region $\mathcal{R}_{\rm T}=\{\x \in {\mathbb{R}^2}\ | \ \|\x\| \leq \ro\}$, see Fig.~\ref{fig:introPic}. 
A single defender is assigned to protect the target from the incoming sequence of intruders and, if possible, capture the intruders before they reach the target boundary $\partial \mathcal{R}_{\rm T}\triangleq\{\x \in {\mathbb{R}^2}\ | \ \|\x\| = \ro\}$. 
 Given an arrival period $T$, the defender maximizes the asymptotic intruder capture fraction
$
 J = \liminf_{t \to \infty} \frac{N(t)}{\lfloor \frac{t}{T} \rfloor},
$
 where $N(t)$ denotes the total number of intruders captured in $t$ amount of time.

There is a circular sensing annulus of radius $\rt$  around the target boundary such that  the defender can sense the location of any attacker present in this region.
We will refer to this annulus sensing region as the \textit{Target Sensing Region} (TSR). 
Each intruder appears on the TSR boundary with a uniform random probability which is {independent} of the former arrivals.  
We assume that the intruders do not communicate/coordinate with each other and they each move radially toward the target center.
\par Let $\xa(t),\xd(t) \in \R$ denote the positions of a representative intruder and the defender at time $t$. 
The defender and the intruders are assumed to have first-order dynamics, i.e., 
\begin{align}
    \dot{\mathbf{x}}_{\rm A} = v_{\rm A} \uvec(\psi_{\rm A}), \qquad \dot{\mathbf{x}}_{\rm D} = v_{\rm D} \uvec(\psi_{\rm D}),
\end{align}
where the defender (intruder) directly controls its speed and heading angle by selecting $v_{\rm D}$ and $\psi_{\rm D}$ ($v_{\rm A}$ and $\psi_{\rm A}$), respectively.
We assume that the defender and the intruders
 move with speeds 1 and $\nu$, respectively, i.e., $|v_{\rm D}(t)| = 1$ and $|v_{\rm A}(t)| = \nu$ for all $t$. 
Furthermore, to avoid a trivial scenario we assume that $\nu < 1 $, i.e., the defender is faster.

 After arriving on the TSR boundary, an intruder moves radially toward the target center until it senses the defender, similar to the setup of \cite{bajaj2019dynamic, macharet2020adaptive, 9561995} among others.
The defender has access to the instantaneous positions $\xa(t)$ for all the intruders that are within the TSR.
Each intruder is equipped with a sensor and is able to sense the defender only if the defender is within a distance of $\ra$ or less, i.e., the intruders have a circular sensing footprint of radius $\ra$.
Using this sensing capability, an intruder is able to find the right breaching point on the target, or able to get out of the TSR uncaptured, or is able to evade for some time before getting captured by the defender. 
 This evasive maneuver is an important capability for the intruders since it forces the defender to pursue and capture the intruder at a location that is likely to be unfavorable for the defender to start pursuing the next intruder.
  \subsection{Parametric Assumption}
   The overall outcome of
this game depends on the game
parameters $\ro,\ra $, $\rt $ and $\nu $. 
We assume that the parameters satisfy the following condition
\begin{align}
   (1+\nicefrac{2\nu}{(1-\nu^2)}) \ra\le \rt, \quad \nu \ro \le \rt.
    \label{eq:Asm}
\end{align}
This assumption is required for capturing a \textit{capturable} attacker inside the TSR, as will be discussed later in much detail in \ref{engsurf}. 
Capturing the attacker inside the TSR is a necessity for the defender since the defender has access to the attacker's location only when the latter is inside the TSR. 
Otherwise, there can be a deadlock situation where, if
the intruder cannot breach, it may successfully evade and keep
trying indefinitely until it successfully gets to breach or no longer has a strategy to evade.
Therefore, it is not possible to relax  assumption \eqref{eq:Asm} without preventing  \textit{evasion} of the attacker.
Furthermore, this assumption also avoids the situation where an incoming intruder can sense the defender as soon as it appears on the TSR boundary.
Such a scenario also leads to a deadlock because the intruder can immediately get out of the TSR if it does not have a strategy to breach. 
The readers are referred to \cite[Theorem~2]{arman} for more details on this assumption. 
The second condition (i.e., $\nu \ro\leq \rt$) is necessary to ensure that the defender can capture any intruder from the target center. 
This way, when the defender has no capturable intruders, it goes toward the target center to increase its probability to capture the next arriving intruder.
\subsection{Apollonius Circle}
Given the locations of the  defender and the intruder  at time $t$, the \textit{Apollonius Circle} contains all the points the intruder can reach before the defender gets there.
The set of all such points is a circular region  with center $\xc(t)$ and radius $r_{\rm C}(t)$:
\begin{align} \label{eq:AC}
    \xc(t) = \alpha\xa(t) - \beta \xd(t), \quad r_{\rm C}(t) = \gamma\|\xa(t) - \xd(t)\|,
\end{align}
where 
$
    \alpha = (1-\nu^2)^{-1}, \quad \gamma = \nu \alpha,\quad \beta = \nu\gamma.
$
\begin{lemma}[\!\!\cite{dorothy2021one}] \label{lem:arXiv}
The defender has a strategy to capture the intruder arbitrary close to the Apollonius circle, regardless of any strategy that intruder chooses to escape. \hfill $\triangle$
\end{lemma}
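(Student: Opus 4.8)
The plan is to split the statement into two guarantees that together give the claim: first, a \emph{confinement} guarantee that the defender can keep the intruder from ever crossing to the strict exterior of the closed Apollonius disk, so the boundary acts as a barrier; and second, a \emph{capture} guarantee that the defender can actually force the relative distance to zero, with the capture point collapsing onto the barrier in the worst case. I would begin by recording the defining property behind \eqref{eq:AC}: a point $\mathbf{q}$ lies on the Apollonius circle exactly when $\|\mathbf{q}-\xa\| = \nu\,\|\mathbf{q}-\xd\|$, i.e.\ the intruder and defender would reach $\mathbf{q}$ at the same instant if each travelled straight to it; points strictly inside are reached first by the intruder and points strictly outside first by the defender.

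For the confinement part I would analyze a \textbf{commit-to-a-point} strategy. Fix any $\mathbf{q}$ on the current Apollonius circle and let the defender move straight toward $\mathbf{q}$ at unit speed, so that $\|\mathbf{q}-\xd(t)\| = \|\mathbf{q}-\xd(0)\|-t$. Since the intruder's speed is at most $\nu$, the triangle inequality gives $\|\mathbf{q}-\xa(t)\| \ge \|\mathbf{q}-\xa(0)\|-\nu t = \nu\|\mathbf{q}-\xd(0)\|-\nu t = \nu\|\mathbf{q}-\xd(t)\|$, using $\|\mathbf{q}-\xa(0)\|=\nu\|\mathbf{q}-\xd(0)\|$. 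Hence $\mathbf{q}$ never slips strictly inside the evolving Apollonius circle, with equality precisely when the intruder also drives straight at $\mathbf{q}$. Sweeping $\mathbf{q}$ over the boundary shows the circle is semipermeable: no admissible intruder control carries it across to the exterior before the defender arrives, so capture (or the intruder's turning back) must occur within the closed disk.

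For the capture part I would use the feedback strategy of holding the relative bearing to the intruder constant (parallel navigation) and track the potential $r_{\rm C}(t)=\gamma\|\xa(t)-\xd(t)\|$, the Apollonius radius from \eqref{eq:AC}. The core technical step --- and the main obstacle --- is the \textbf{nesting property}: under this strategy the Apollonius disk at time $t$ is contained in the disk at every earlier time, \emph{for every} admissible intruder control, so the intruder's safe set only shrinks and $r_{\rm C}(t)$ is non-increasing and driven to $0$, i.e.\ the nested circles collapse onto a single capture point lying in the initial disk. Establishing this monotonicity uniformly over all intruder headings (not merely straight-line play) is where the real work lies, since one must bound $\dot r_{\rm C}$ against the worst-case evader direction rather than a cooperative one.

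Finally, the ``arbitrarily close'' qualifier follows from tightness of the barrier bound combined with a limiting argument. The intruder's best escape is to drive straight at a boundary point $\mathbf{q}$: then both inequalities above hold with equality and capture occurs exactly at $\mathbf{q}\in\partial\mathcal{A}$, whereas any other play forces the nested circles to collapse strictly inside, which only helps the defender. Since the defender cannot know the intruder's chosen $\mathbf{q}$ in advance, a continuity argument in the intruder's target choice shows the guaranteed capture point can be placed within any prescribed $\epsilon>0$ of $\partial\mathcal{A}$. I expect the uniform nesting bound and this final $\epsilon$-limiting step to be the two places demanding care; the remainder reduces to the elementary distance inequalities above.
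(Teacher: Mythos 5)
The paper offers no proof of this lemma at all --- it is imported verbatim from \cite{dorothy2021one} --- so your argument has to stand entirely on its own, and it does not. The confinement step is invalid as written: the inequality $\|\mathbf{q}-\xa(t)\|\ge\nu\|\mathbf{q}-\xd(t)\|$ is derived under the hypothesis that the defender moves \emph{straight at that particular} $\mathbf{q}$, so each choice of $\mathbf{q}$ certifies a different open-loop defender strategy that protects only that one point. ``Sweeping $\mathbf{q}$ over the boundary'' therefore produces a family of strategies, not a single strategy under which the entire circle is semipermeable. Concretely, if the defender commits to one boundary point and the intruder runs toward a diametrically opposite part of the initial circle, nothing in your computation prevents the intruder from exiting the initial Apollonius disk. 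Semipermeability is a statement about one feedback law defended against all intruder controls simultaneously, and you have not exhibited such a law.

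The step that would repair this --- the nesting property that, under some concrete feedback strategy, the instantaneous Apollonius disk is contained in all earlier ones for \emph{every} admissible intruder control while $r_{\rm C}(t)\to 0$ --- is essentially the entire content of the lemma, and you explicitly flag it as ``where the real work lies'' without carrying it out. One must specify the defender's heading as a function of the current state, show that the relative distance $\|\xa(t)-\xd(t)\|$ (equivalently $r_{\rm C}(t)$) decreases at a uniformly positive rate against the worst-case intruder heading, and verify that the shrinking disks remain inside (an $\epsilon$-neighborhood of) the initial one; without that computation the proof reduces to restating the claim. A smaller point: the closing $\epsilon$-argument misreads the statement. ``Arbitrarily close to the Apollonius circle'' here means the intruder cannot be forced to a capture point more than $\epsilon$ \emph{outside} the initial disk --- which is how the paper later uses the lemma, namely that the intruder ``cannot get out of the Apollonius circle'' --- not that the capture point can always be steered to within $\epsilon$ of the boundary circle; a poorly playing intruder is simply captured strictly inside.
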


Apollonius circle is a critical tool for analyzing these types of games since it specifies the regions where the agents can have winning strategies.

\section{Game Phases}
Each game between the defender and any of the intruders can be divided into two phases, namely, the \textit{Full information phase} and the \textit{Partial/Asymmetric information phase}.
 From the time an intruder arrives on the TSR boundary to the time it first senses the defender is referred to as the \textit{partial information phase} between that intruder and the defender.
Therefore, in this phase, only the defender can sense the intruder  and not vice-versa.
%
Defender's strategy against that intruder in this phase depends on whether the intruder is \textit{capturable} or not. 
If it is capturable, then the defender picks a location and time to engage with the intruder by coming within the intruder's sensing region.
Otherwise, the defender picks another intruder or movers closer to the target center. 
\par 
On the other hand, in the \textit{Full information phase}, both agents can sense each other, i.e., $\|\xa(t)\|<\ro+\rt$ and $\|\xa(t)-\xd(t)\|\le \ra$. In this phase, the defender has a strategy to capture the intruder if the Apollonius circle fits inside the TSR.
However, if the Apollonius circle has any intersection with the target, the intruder would have a guaranteed strategy to breach, and if it has any intersection with the TSR boundary, then the intruder would have a guaranteed strategy to escape before being captured by the defender.

\subsection{Partial Information Phase} \label{sec:partInfo}
In this phase, the defender has access to the intruders'
information and not vice-versa.
The intruders move radially until it senses the defender.
To exploit this information advantage, the defender engages with the intruders in particular configurations that guarantee capture.
In \cite{arman} we show that earliest engagement is not the optimal strategy since the intruder may survive by evading. 
The defender may need to wait for a certain amount of time before it engages with the intruder. 
A sub-optimal waiting strategy was first introduced in \cite{9561995} and the optimal engagement strategy was derived in \cite{arman}. 
We will discuss these desirable engagement configurations and how to reach such configurations using the notion of \textit{Engagement Surface}.

\subsubsection{Engagement Surface} \label{engsurf}
Given an intruder location $\xa(t)$ in the partial information phase, we can accurately predict its path since it is moving in a straight line toward the target center with maximum velocity. 
Let $\xa(t_1)$ be the location of the intruder at time $t_1 > t$. 
To start the engagement with the intruder exactly at time $t_1$, the defender must be at point on the intruder's sensing boundary, i.e., $\xd(t_1) = \xa(t_1) + \ra\uvec({\theta})$ for some $\theta \in [0,2\pi)$.
The choice of $\theta$ (equivalently, the choice of $\xd(t_1)$) will determine the Apollonius circle, and the final outcome (i.e., capture/evade/breach) of the engagement.
We defined the notion of \textit{engagement surface} to represent this concept in \cite{arman}. 
\begin{definition}[Engagement Surface]
Given an intruder location $\xa(t_0)$ at time $t_0$, the \textit{engagement surface} for this intruder, denoted as $\Se(\xa(t_0))$, is the locus of the points $\xd \triangleq \xa(t_0+\taue)+\ra\uvec(\thetae)$, where $(\taue,\thetae)$ satisfies 
\begin{equation} \label{eq:engagementSurface}
    \sin^2\bigg(\frac{\thetae-\theta_{\rm A}}{2}\bigg) =   \frac{(\ro+\gamma\ra)^2 - (r_{\rm A}-\taue \nu -\beta\ra)^2}{4\beta\ra(r_{\rm A}-\taue \nu)},
\end{equation}
where $(r_{\rm A}, \theta_{\rm A})$ is the polar coordinate representation of the intruder's given location $\xa(t_0)$. \hfill $\triangle$
\end{definition}
%

Note that $\taue $ represents the time spent before the intruder senses the defender and the full information phase starts and $\thetae$ denotes the angle on the intruder’s sensing boundary the defender will engage. 
Since the pair $(\taue, \thetae)$ uniquely defines a spatio-temporal engagement point for a given intruder location $\xa(t_0)$, we will simply consider $(\taue, \thetae)$ satisfying \eqref{eq:engagementSurface} to be the elements of the set $\Se(\xa(t_0))$.
  A representative engagement surface is shown in Fig.~\ref{fig:engagement_surface}.

\begin{figure}
    \centering
   { \includegraphics[trim = 40 20 40 20, clip, width=0.27\textwidth]{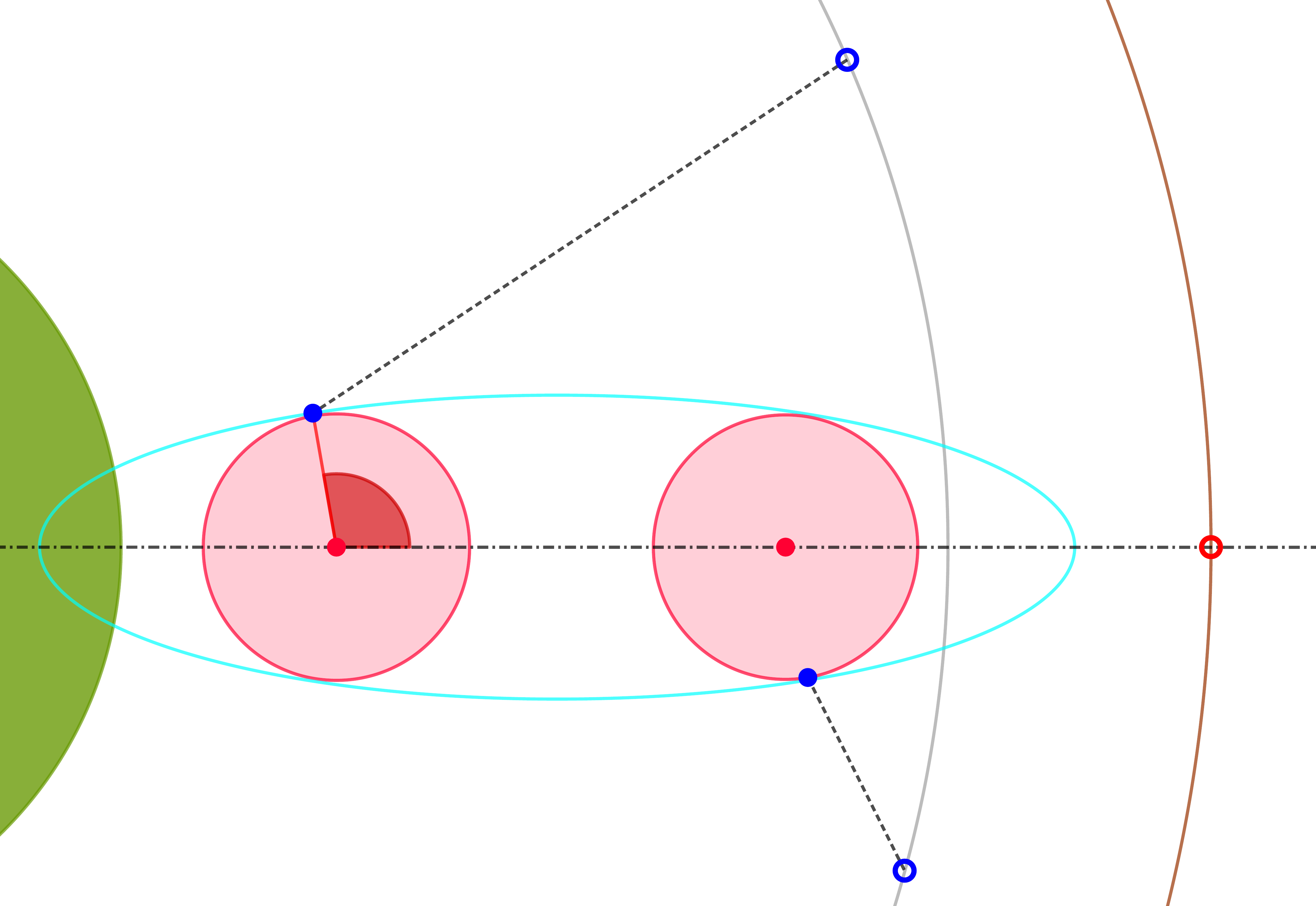}}
     \put(-108, 27) {\small \re $\xa(t_0\!+\!\taue)$}
     \put(-95, 45) {\small \re $\thetae$}
     \put(-110, 55) {\small  $x_1$}
     \put(-44, 75) {\small {$D_1$}}
      \put(-38, 0) {\small {$D_2$}}
      \put(-58, 15) {\small {$x_2$}}
    \hfill
    {\includegraphics[ width=0.19\textwidth]{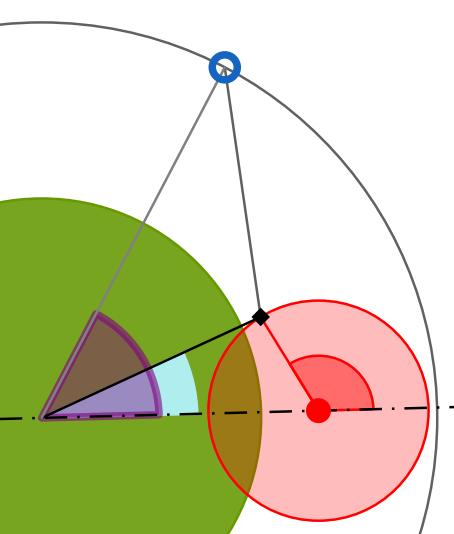}}
        \put(-70,70) {$r$}
    \put(-60,47) {$r_{\rm eng}$}
    \put(-40, 15) {$\xa(t_0\!+\!\taue^*)$}
    \put(-30, 40) {\re $\thetae^*$}
     \put(-65, 30) {\color{blue} $\phie$} 
     \put(-97, 40) {\color{purple!50!blue} $\theta_{\max}$} 
     \put(-40,52) {$\x_{\rm eng}$}
    \caption{ 
    (Left) The cyan oval-shaped curve represents the locus of the engagement points $\xa(t_0+\taue) + \ra\uvec(\thetae)$. 
    We demonstrate a configuration where a defender $D_1$  starts the full information phase by engaging at the point $x_1$ when the intruder is at $\xa(t_0+\taue)$.
        (Right) The engagement configuration with the intruder at time $t_0+\taue^*$ where {\re $\medbullet$ }  and $\Diamondblack$ denote the locations of the intruder and the defender, respectively.
    The hollow blue circle denotes the initial location of the defender.
    }
    \label{fig:engagement_surface}
    \vspace{- 10 pt}
\end{figure}

%
Starting the full information phase from any of the points on this engagement surface guarantees capture of the intruder
since the Apollonius circles generated from these engagement configurations do not have any intersection with the target perimeter or the TSR boundary.
This can be verified by constructing the Apollonius circle as per \eqref{eq:AC} and using Assumption \eqref{eq:Asm}.
Given that the capture is guaranteed if engagement is started from this surface, the intruder's objective in this case is to get captured the farthest from the target center.
This will increase the breach probability for the other intruders as discussed in \cite[Corollary 1]{arman}.
Furthermore, it has been shown in \cite{arman} that the farthest capture point will be at a radial location of $\ro + 2\gamma\ra$ since the radius of Apollonius circle will be $\gamma\ra$ and the center will at a distance of $\ro+\gamma\ra$ from the target center. 
The circle with radius $\ro+2\gamma\ra$ and concentric with the target is referred to as the \textit{Capture Circle}, since all the captures of the intruders will happen on this circle, see \cite{arman} for details.

\subsubsection{Capturability of an Intruder} \label{Max Angle}
Let the defender be located at a point $r\uvec({\theta_{\rm D}})$ at the time the intruder appears on the TSR boundary. In \cite[Theorem~1]{arman}, we proved the condition on $r$ and $\theta_{\rm D}$ that ensures capturing an incoming intruder utilizing the \textit{Engagement Surface}. This theorem provides the maximum allowed initial angular separation $\theta_{\max}$ between the intruder and the defender to
ensure capture by starting the engagement at $(\taue,\thetae) \in \Se(\xa(t_0))$:
\begin{subequations} \label{Maxanglesep}
\begin{align} 
    &\theta_{\max}(\taue,\thetae, r) = \cos^{-1} \bigg( \frac{ \reng^2 +r^2 - \taue^2}{2\reng r} \bigg)  + \phie, \\
    &\phie = \sin^{-1}\bigg(\frac{\ra \sin(\thetae)}{\reng} \bigg),  \\
    \begin{split}
    & \reng = \Big((\ro+\rt-\taue\nu)^2 +\ra^2 \\
    & \qquad\quad + 2(\ro+\rt-\taue\nu) \ra\cos(\thetae)\Big)^{1/2}. \qquad 
    \end{split}
    \end{align} 
     \end{subequations}
    We notice that $\theta_{\max}$ depends on the choice of $\taue$ and $\thetae$.
    By maximizing over $(\taue,\thetae)$ with the constraint \eqref{eq:engagementSurface}, we obtain the pair $(\taue^*, \thetae^*)$.
    \begin{remark}
    The importance of finding $\theta_{\max}(\taue^*,\thetae^*)$ is that, if the angular separation between an intruder and the defender is more than $\theta_{\max}(\taue^*,\thetae^*)$ at the time the intruder arrived on the TSR boundary, then it is ensured that the defender is not able to capture this intruder. 
    Therefore, the defender can simply ignore this intruder from the game and focus on the rest. 
    If the angular separation is exactly $\theta_{\max}(\taue^*,\thetae^*)$, then $(\taue^*,\thetae^*)$ is the unique point where the defender needs to start the engagement. 
    Otherwise, if the angular separation is less, the defender has multiple engagement points and it will be able to start engagement sooner than $\taue^*$.
    Thus, $\taue^*$ is a tight upper bound on the amount of time spent before the engagement starts. \hfill $\triangle$
    \end{remark}
    
    Equation \eqref{Maxanglesep} is computed for an intruder that just arrived on the TSR boundary. 
    We now generalize this concept to find the maximum angular separation to be able capture an intruder located at an arbitrary radial location of $r_{\rm A}$. 
    That is, given a defender located at $r\uvec(\theta_{\rm D})$, it is able to capture an intruder currently located at $r_{\rm A}\uvec(\theta_{\rm A})$ by starting the full information phase $\taue$ amount of time from now, if $|\theta_{\rm D} - \theta_{\rm A}|\le \theta_{\max}(\taue,\thetae, r, r_{\rm A})$, where $\theta_{\max}(\taue,\thetae, r, r_{\rm A})$ has the same expression as in \eqref{Maxanglesep} with $r_{\rm eng}$ being
    \begin{align*}
        \reng=\big((r_{\rm A}-\taue\nu)^2+\ra^2+2(r_{\rm A}-\taue\nu)\ra \cos(\thetae)\big)^{1/2}.
    \end{align*}
    By maximizing over $(\taue,\thetae)$ with the constraint \eqref{eq:engagementSurface}, we find $\theta_{\max}(\taue^o,\thetae^o, r, r_{\rm A})$ that provides the \textit{capturability condition} for an intruder located at a radial distance of $r_{\rm A}$. 
    Using $\theta_{\max}(\taue^o,\thetae^o, r, r_{\rm A})$, we define the \textit{capturable set} $\Omega_{\rm cap}$ for a given defender location $r\uvec(\theta_{\rm D})$ as follows
    \begin{align}
        \Omega_{\rm cap}(r,\theta_{\rm D}) =\{ (r_{\rm A}, \theta_{\rm A}) ~:|\theta_{\rm A}\!-\!\theta_{\rm D}|\le  \theta_{\max}\!(\taue^o,\thetae^o, r, r_{\rm A}\!)\}.
    \end{align}
        If an intruder is located at $r_{\rm A}\uvec(\theta_{\rm A})$ such that $(r_{\rm A}, \theta_{\rm A}) \in \Omega_{\rm cap}(r,\theta_{\rm D})$, then that intruder is capturable.
        Thus, by computing this region $\Omega_{\rm cap}(r,\theta_{\rm D})$, the defender can easily check which intruders are capturable and which are not. 
    In Fig.~\ref{fig:capturability}, we illustrate this region. 
     \begin{figure}
    \centering
     {\includegraphics[trim = 10 10 10 10, clip, height = 3.4 cm, angle = 270, origin = c]{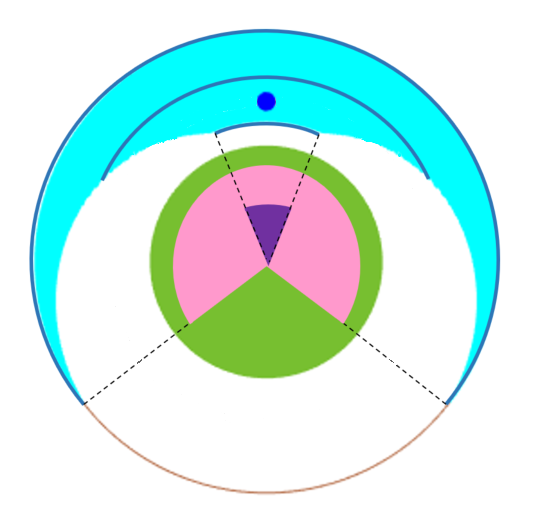}}
     \put (-32,62) {$o_1$}
     \put (-43,80) {$o_2$}
     \put (-88,85) {$o_k$}
     \vspace{ - 6 pt}
    \caption{The cyan region represents $\Omega_{\rm cap}$ for a defender where {\color{blue} $\medbullet$} denotes its location.
    The blue curves represent the intersection of each orbit $o_i$ with the region $\Omega_{\rm cap}$ which form the capturable arcs, as discussed in Section~\ref{sec:Lowebound}.
    The purple and the pink angular sectors represent the angles ($\varphi_1$ and $\varphi_k$, respectively) formed by the first and the last orbits.} \label{fig:capturability}
\vspace{-10 pt}
\end{figure}
    
    \begin{remark} \label{rem:OmegaD}
    $\Omega_{\rm cap}(r,\theta_{\rm D})$ is symmetric about $\theta_{\rm D}$ and furthermore, due to the circular symmetry, $\Omega_{\rm cap}(r,\theta_{\rm D})$ obtained by rotating $\Omega_{\rm cap}(r,0)$ by angle $\theta_{\rm D}$, for every $r$. \hfill $\triangle$
    \end{remark}
    
    \begin{remark}
    Let the defender be located at a point $r\uvec({\theta_{\rm D}})$.
    For any intruder located at $r_{\rm A}\uvec(\theta_{\rm A})$, the defender is able capture this intruder  if $(r_{\rm A}, \theta_{\rm A}) \in \Omega_{\rm cap}(r,\theta_{\rm D})$.
    Furthermore, the full information phase will start no later than $\taue^o$.
    Note that $\taue^o$ is only a function of the radial locations of the intruder ($r_{\rm A}$) and the defender ($r$).
    Furthermore, for all $r_{\rm A}< \rt+ \ro$, it can be verified that $\taue^o < \nicefrac{\ra}{(1+\nu)} + \nicefrac{(r_{\rm A} - \ro)}{\nu}$. 
    \hfill $\triangle$
    \end{remark}

\subsection{Full Information Phase} \label{sec:fullInfo}
In this phase of the game, both agents can sense each other. As soon as the intruder senses the defender it computes the instantaneous Apollonius circle and checks whether this configuration leads to a capture or breach by checking whether this circle intersects with the target or the TSR boundary, respectively.

Let the intruder and defender locations at the start of the engagement phase be $\xa$ and $\xd = \xa + \ra\uvec(\thetae)$, respectively.
Therefore, the Apollonius circle center will be at $\xc = \alpha\xa - \beta \xd = \xa -\beta \ra \uvec(\thetae)$.
Let us denote $\xa \equiv r_{\rm A}\uvec(\theta_{\rm A})$ and $\xc \equiv r_{\rm C} \uvec(\theta_{\rm C})$, and consequently,
\begin{align} \label{eq:thetaC}
\theta_{\rm C} = \tan^{-1}\bigg( \frac{r_{\rm A}\sin(\theta_{\rm A}) - \beta\ra \sin({\thetae})}{r_{\rm A}\cos(\theta_{\rm A}) - \beta\ra \cos({\thetae})}\bigg).
\end{align}
Therefore, the point $\x_p$ on the Apollonius circle that is farthest from the target center will be at 
\begin{align} \label{eq:captureLocation}
   \x_p = \xc + \gamma \ra \uvec( \theta_{\rm C}), 
\end{align}
since the radius of the Apollonius circle is $\gamma \ra$.

Since the engagement point is chosen to be on the engagement surface, one may verify that $\|\x_p\| = \ro + 2\gamma\ra$.
That is every capture will occur on a circle of radius $\ro + 2\gamma\ra$.
This helps us analyzing the game since, after capturing an intruder, the defender always starts it next pursuit from this circle. 
Therefore, the defender can precompute the capturability region $\Omega_{\rm cap}(\ro +2\gamma\ra, 0)$ and using Remark~\ref{rem:OmegaD} can compute its instantaneous capturable region for any $\theta_{\rm D}$.

\begin{remark}
Although $\x_p$ is the point that is furthest from the target, the intruder still may chose to not move towards $\x_p$ and get captured at a different point on the Apollonius circle. 
Based on Lemma~\ref{lem:arXiv}, we can construct a pursuit strategy for the defender which will ensure that the intruder cannot get out of the Apollonius circle.
For the subsequent analysis, we will assume that the intruder gets captured at $\x_p$. \hfill $\triangle$
\end{remark}
\par
 During this phase, an intruder (say intruder $i$) might sense the defender while the defender is pursuing another intruder. 
 At this moment, intruder $i$ constructs the Apollonius circle and checks whether it can breach.
 If it cannot breach but escape is possible, then this intruder confirms that the defender has targeted another intruder since a rational defender will not waste time by engaging with an intruder that can escape.
 If neither breach nor evasion is possible, the intruder moves toward the point $\x_p$ for an infinitesimal amount of time and checks whether the defender has also moved toward $\x_p$.
 If the defender did not move toward $\x_p$, the intruder concludes that the defender is pursuing another intruder and hence intruder $i$ resumes moving radially toward the target.
 Otherwise, they continue their motion toward $\x_p$ and capture happens.



\section{Defender Strategy} \label{sec:GameAnalysis}
The game begins with the defender being located at the center of the target.
The first intruder appears on the TSR boundary at  time $t=0$. 
Since this intruder is guaranteed to be captured due to Assumption \eqref{eq:Asm}, the defender moves to the closest engagement point and waits until the intruder senses the defender. As soon as the intruder senses the defender, the \textit{full information phase} starts and capture would take place on the \textit{Capture Circle} as discussed in Section~\ref{sec:fullInfo}.
One may also verify that this intruder will be captured at time $t=(\nicefrac{\rt}{\nu})+2\gamma \ra$. 
New intruders will appear randomly on the TSR boundary at every $T$ amount of time, where $T$ is the arrival period.
By the time the first intruder is captured, $\lfloor \nicefrac{(\nicefrac{\rt}{\nu}+2\gamma \ra)}{T} \rfloor$ new intruders would have appeared and, depending on the game parameters, some of them may even have breached while the defender was pursuing the first one, e.g., if $2\gamma\ra <T$, then the second intruder would have breached.
After the first capture, the defender is located on the capture circle and makes a decision on how and which intruder to pursue. 
In this paper we propose three strategies for the defender to decide which intruder to pursue, namely: \textit{Nearest Agent}, \textit{Earliest Breach}, and \textit{Weighted Distance} strategy.


\subsection{Nearest Agent Strategy}
In this strategy, the defender considers the intruder nearest to it to pursue.
If the nearest intruder was capturable -- which can be easily decided by checking whether that intruder lies within $\Omega_{\rm cap}$ -- the defender will find the earliest engagement point $(\taue,\thetae)$ that lies on the engagement surface of this intruder and reachable by this defender, i.e., given $\xa(t_0)$, the defender solves the following optimization problem
\begin{align} \label{eq:optimization}
\begin{split}
    \min\nolimits_{\taue,~\thetae} ~~~& \taue\\
    \text{subject to  }~~& (\taue, \thetae) \in \Se(\xa(t_0)), \\
    &\|\xa(t_0+\taue)+\ra\uvec(\thetae) - \xd(t_0)\| \le \taue.
\end{split}
\end{align}
The defender start the full information phase at this earliest possible engagement location. 
In this way, the defender minimizes the time to start the engagement phase so that it has more time pursue the other intruders. 
If the nearest intruder is not capturable, it checks whether the second nearest one is capturable, and so on. If none of the available intruders in the TSR is capturable, i.e., none of the intruders are within $\Omega_{\rm cap}$, 
the defender moves radially toward the center of the target until a new intruder appears on the TSR boundary and then the defender checks the capturability of this newly arrived intruder.
If this intruder is not capturable then the defender keeps moving toward the target center and will check capturability of the next intruders appearing on the TSR boundary. 
This process repeats until an intruder arrives that is capturable, or the defender is less than $(\nicefrac{\rt}{\nu})-\ro$ distance away from the target center, in which case it is guaranteed to capture any incoming intruder \cite[Lemma~2]{arman}.
For any capturable intruder, the defender picks the earliest reachable engagement point, since minimizing the capture time gives the defender more time to pursue other intruders.

\begin{remark}
The duration of the game between the defender and an intruder is comprised of two components: the time ($\taue$) spent in the partial information phase and the time to capture the intruder once the full information phase starts. 
When capture occurs at $\x_p$ defined in \eqref{eq:captureLocation}, the capture time only depends on the choice of $\thetae$, and given by the expression $t_{\rm cap} \triangleq \alpha\ra\!\sqrt{1+\nu^2 + 2\nu\cos(\thetae\! -\! \theta_{\rm C})}$, where $\theta_{\rm C}$ is defined in \eqref{eq:thetaC}.
Therefore, one could consider $\taue + t_{\rm cap}$ in \eqref{eq:optimization} to minimize the total game duration instead of minimizing the duration of the partial information phase ($\taue$). 
\hfill $\triangle$
\end{remark}

The optimization problem in \eqref{eq:optimization} is solved numerically via exhaustive search in this paper.
Efficient techniques can be developed that leverages the geometry of the engagement surface and left as a potential future work.

\subsection{Earliest Breach Strategy}
This strategy considers the intruders' distances from the center of the target. 
The objective is to prioritize the intruder that is the closest to breach, which is analogous to the concept of \textit{Earliest Deadline First} for dynamic priority scheduling algorithms in real-time systems.
In this approach, the defender first computes the region $\Omega_{\rm cap}$ and considers the intruder in this region that is closest to the target center.
Similar to the \textit{Nearest Agent Strategy}, the defender's objective here is also to minimize the capture time, therefore, it picks the earliest engagement point.
If $\Omega_{\rm cap}$ does not contain any intruder, the defender moves toward the target center until the next intruder appears on the TSR boundary. 

Notice that the major difference between the \textit{Nearest Neighbor} and \textit{Earliest Breach} strategies is how the defender prioritizes the intruders within $\Omega_{\rm cap}$. 
If $\Omega_{\rm cap}$ is does not contain any intruder, then both the algorithms will behave exactly the same.


       


\begin{algorithm} [t]
    \caption{Initialization $\&$ Intruder Arrival}
    \label{euclid}
    \begin{algorithmic}[1] 
    \State Initialize $\xd \gets [0,0]^\intercal$, $N_{\rm capture}\gets 0$, $n\gets 1$, $\tilde\rt \gets (\ro + \rt)$, and $N , T$ \Comment{$N$ = total no. of arrivals}
    \For{$t = 0:T: NT$}
    \State $\theta_{\rm A}(n) \sim {\mathcal U}(-\pi, \pi)$  \Comment{Uniform random arrival}
    \State $\xa(n) \gets \tilde\rt\uvec(\theta_{\rm A}(n))$
    \State $n \gets n+1$
    \EndFor
    \end{algorithmic}
    \end{algorithm}
\begin{algorithm}[t]
    \caption{Defender’s strategy} \label{algorithm3}
    \begin{algorithmic}[1] 
    \State Initialize the weight $w \in [0,1]$ and the arrival process (Algorithm~\ref{euclid})
    \State $t_0  \gets $ current time,
    \State {\tt dist} $\gets \emptyset$  \Comment List of capturable intruders
    \For{$ n=1 : m$ } \Comment{ $m$ = No. of intruders within TSR}
    \If{$\xa{_{,\,n}}(t_0) \in \Omega_{\rm cap} (\xd(t_0))$ } \Comment {{\color{blue!60}\textit{capturability} test}}
      \State {\tt dist}$_n \gets w \|\xa{_{,\,n}}(t_0)\| + (1-w) \|\xa{_{,\,n}}(t_0) - \xd(t_0)\|$
    \EndIf
    \EndFor
    \If {{\tt dist}$\ne \emptyset$ }
    \State $\displaystyle i^* \gets \arg\!\!\min_{n=1:m} {\tt dist}_n$ \Comment Pick the `closest' intruder
    \State $(\taue, \thetae) \gets$ Solution from \eqref{eq:optimization} for $\xa{_{,\,i^*}}$
    \State Defender goes to ${\x}_{\rm eng}$ \Comment \textit{Full Info.} phase starts
        \State $N_{\rm capture}\gets N_{\rm capture} +1$ \Comment{{\color{blue!60}capture happens}}
        \State $\xd \gets \x_p$ \Comment{$\xd$ after capture}
    \Else
    \State Defender gets closer to the target center until the next intruder appears on the TSR boundary
    \EndIf
    
    
    \State Repeat from Step 2
    \end{algorithmic}
\end{algorithm}
\subsection{Weighted Distance Strategy}
In this strategy, we propose a hybrid between the \textit{Nearest Agent} and the \textit{Earliest Breach} strategies. In this approach,  the defender instead of checking the intruders' distances from the target or from itself, it considers a convex combination of these distances and picks the intruder within $\Omega_{\rm cap}$ that has the lowest weighed distance. 
This weighted distance approach is a generalized method that contains the previous two strategies as special cases by appropriately choosing the weight. 
By adaptively changing the weights, the defender will be able to efficiently adjust its behavior for more complicated scenarios such as where the arrival is aperiodic.
The pseudocode is presented in Algorithms~\ref{euclid} and~\ref{algorithm3}. 
\begin{remark}
Note that all the algorithms have the same computational complexity since they only differ in the weight $w$ used in line~6 of Algorithm~\ref{algorithm3}.
\end{remark}
%

    

       

\section{Performance Guarantee on Earliest Breach strategy} \label{sec:Lowebound}

We measure the performance of the proposed strategies by
their capture fraction. Capture fraction is the ratio of the number of intruders
captured and the number of intruders arrived. In this section,
we compute a lower bound on this capture fraction for the \textit{Earliest Breach} strategy.

Recall from our previous discussion that the defender minimizes the engagement time, which depends on the location of the intruder.
Therefore, the actual engagement time is a random variable due to the random arrival points of the intruder. 
The probability distribution of the earliest engagement time is analytically and computationally intractable. 
 To analytically compute a lower bound on the capture fraction, we use an upper bound on this random variable. 
  We further assume that, after each capture, the defender waits for a new intruder to appear on the TSR boundary before it checks whether $\Omega_{\rm cap}$ contains any intruder or not.
  By this way, we are ensured that the intruders will be located on a group of \textit{fixed} circular orbits $o_1,o_2,...,o_k$ with $T\nu$ distance from each other and the first orbit being the TSR boundary itself.
  As shown in Fig.~\ref{fig:capturability}, for a given defender location, just an arc on each orbit is capturable. 
  These arcs are the intersections between the orbits and the set $\Omega_{\rm cap}$. 
  
  
  Let $\varphi_i$ be the angle made by the arc associated with orbit $o_i$; see Fig.~\ref{fig:capturability}. 
  Therefore, 
  \[
  \varphi_i = \theta_{\max}(\taue^o, \thetae^o, r, r_{\rm A})\big|_{r = \ro+2\gamma\ra,~r_{\rm A}= \ro+\rt-(i-1)T\nu}.
  \]
  The probability an intruder being located on that arc of orbit $o_i$ is $\nicefrac{\varphi_i}{2\pi}$, since the intruders appear uniformly randomly on the TSR boundary and move toward the target center radially afterwards.
    Let $\po$ denote the probability that there is at least one intruder on one of these arcs. 
    Therefore, 
    \[
    p_{\Omega} = 1- \prod\nolimits_{i=1}^{k}(1-q_i),
    \]
  where $k = \lceil \nicefrac{\rt}{(T\nu)} \rceil$ is the total number of orbits.
  
  With $1- \po$ probability, there will not be any intruder within $\Omega_{\rm cap}$ and therefore, the defender will move toward the target center and check capturability of the next arriving intruder after $T$ amount of time. 
  The defender keeps moving toward the target center and keeps checking capturability as soon as new intruder arrives. 
  Let $p_i$ denote the probability that the defender has been unsuccessful to find a capturable intruder $i$ times in a row. 
  Therefore, the defender will be at a radial location of $\ro+2\gamma\ra - i T$.
  Note that, this probability only depends on the intruder arriving on the TSR boundary, and therefore is computed based on the maximum angular separation formula \eqref{Maxanglesep}, i.e., $p_i=\nicefrac{\theta_{\max}(\taue^*,\thetae^*,~ \ro+2\gamma\ra - i T)}{2\pi}$.
  We now present the lower bound of the \textit{Earliest Breach} strategy in the following lemma.

 \begin{lemma} \label{lem:lowerBound}
  For a given arrival period $T$, a lower bound on the capture fraction of the \textit{Earliest Breach} strategy is
 \begin{align} \label{centerlowbound}
          c_\infty \triangleq \frac{T (1-\nu)}{\ra+(1-\nu)\tau_{\rm avg}},
 \end{align}
 where $\tau_{\rm avg}$ is derived in Lemma~\ref{lem:tauAvg}. \hfill $\triangle$
 \end{lemma}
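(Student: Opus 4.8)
The plan is to read the capture fraction as a ratio of two rates and to lower-bound it by upper-bounding the expected length of a single \emph{capture cycle}. After every capture the defender is deposited on the capture circle of radius $\ro+2\gamma\ra$ (Section~\ref{sec:fullInfo}), and by the circular symmetry of Remark~\ref{rem:OmegaD} the capturable region $\Omega_{\rm cap}$ is, up to rotation, the same regardless of the angular location of the previous capture. Under the modified policy in which the defender waits for a fresh arrival before re-testing $\Omega_{\rm cap}$, every cycle therefore begins from a statistically identical configuration and depends only on the subsequent i.i.d.\ uniform arrivals, so the cycles form a renewal sequence. Since arrivals are deterministic with period $T$, the denominator $\lfloor t/T\rfloor$ counts arrivals exactly, and applying the renewal-reward theorem to the capture epochs gives $J = T/\bar t$ almost surely, where $\bar t$ is the expected cycle length. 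It then suffices to establish $\bar t \le \frac{\ra}{1-\nu}+\tau_{\rm avg}$.

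I would split each cycle into its \emph{full-information (pursuit) phase} and the remaining \emph{partial-information and waiting phase}. For the pursuit phase I would invoke the capture-time expression $t_{\rm cap}=\alpha\ra\sqrt{1+\nu^2+2\nu\cos(\thetae-\theta_{\rm C})}$ from the preceding remark and maximize over the engagement geometry: the worst case is $\cos(\thetae-\theta_{\rm C})=1$, which yields the uniform bound $t_{\rm cap}\le \alpha\ra(1+\nu)=\frac{\ra}{1-\nu}$. Replacing the random pursuit time by this maximum in every cycle can only inflate $\bar t$, so it preserves the inequality direction and keeps the final estimate a genuine lower bound on $J$.

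The remaining contribution is precisely the quantity $\tau_{\rm avg}$ furnished by Lemma~\ref{lem:tauAvg}; for the present proof I would treat it as the expected waiting-plus-engagement time and note only how it assembles from the ingredients already set up. With probability $\po=1-\prod_{i=1}^{k}(1-q_i)$ a capturable intruder sits on one of the fixed orbits right after the first post-capture arrival, in which case the phase costs the upper-bounded engagement delay $\taue^*$; with the complementary probability the defender spends a round of duration $T$ drifting inward and re-tests at the next arrival, the per-round success probabilities being those governed by $p_i=\theta_{\max}(\taue^*,\thetae^*,\ro+2\gamma\ra-iT)/(2\pi)$. Summing the resulting geometric-type series of waiting times weighted by their probabilities, and terminating it through the guaranteed-capture fallback once the defender lies within $\frac{\rt}{\nu}-\ro$ of the center (\cite[Lemma~2]{arman}), gives the finite expectation $\tau_{\rm avg}$. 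Adding the two phase bounds yields $\bar t\le \frac{\ra}{1-\nu}+\tau_{\rm avg}$, hence $J\ge T/\bar t = \frac{T(1-\nu)}{\ra+(1-\nu)\tau_{\rm avg}}=c_\infty$.

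The hardest part will be verifying that the three relaxations---the enforced waiting for fresh arrivals, the substitution of the worst-case engagement delay $\taue^*$, and the maximal pursuit time $\frac{\ra}{1-\nu}$---\emph{jointly} inflate the cycle length monotonically, so that the bound is a true lower bound rather than a heuristic, while simultaneously leaving the cycle sequence genuinely i.i.d.\ so that the renewal-reward identity $J=T/\bar t$ is legitimate. A secondary obstacle is guaranteeing that the waiting series converges, i.e.\ that $\bar t$ is finite: this hinges on the inward drift almost surely reaching the guaranteed-capture radius in boundedly many rounds, since otherwise the expected cycle length---and with it the bound---could diverge.
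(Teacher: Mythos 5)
Your proposal reaches the correct bound with the same two quantitative ingredients as the paper --- the worst-case pursuit time $\nicefrac{\ra}{1-\nu}$ (your derivation of it by maximizing $t_{\rm cap}$ over $\cos(\thetae-\theta_{\rm C})$ is a nice justification the paper omits) and the bound $\tau_{\rm avg}$ from Lemma~\ref{lem:tauAvg} --- but the probabilistic machinery you use to turn ``expected cycle length'' into ``capture fraction'' is genuinely different. You invoke the renewal--reward theorem to get $J = T/\bar t$ almost surely, which requires the capture cycles to form an i.i.d.\ (or at least regenerative) sequence; you correctly flag this as the hard part, and it is not free, since the intruder configuration at the start of a cycle depends on arrivals that occurred during earlier cycles. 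The paper avoids this entirely: it writes the cumulative time as $T_{n+1} \le T_1 + n\frac{\ra}{1-\nu} + \sum_{i=2}^{n+1}\tau_{{\rm eng},i}$, applies Jensen's inequality to $\mathbb{E}\bigl[(n+1)/\lfloor T_{n+1}/T\rfloor\bigr]$, bounds each $\mathbb{E}[\tau_{{\rm eng},i}]$ by $\tau_{\rm avg}$ individually (no independence across cycles needed), and lets $n\to\infty$ so that the deterministic $T_1$ and the floor-function correction $\epsilon\in[0,T)$ wash out. The trade-off: the paper's route is more elementary and needs only marginal expectation bounds, but yields a statement about the expected capture fraction $c_{n+1}$; your route, if you could rigorously establish the regenerative structure, would give a pathwise (almost-sure) limit that matches the $\liminf$ definition of $J$ more faithfully. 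Your worry about finiteness of $\bar t$ is handled the same way the paper handles it inside Lemma~\ref{lem:tauAvg}: the inward drift terminates after at most $\ell$ rounds at the guaranteed-capture radius, so the series is a finite sum, not an infinite one.
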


 \begin{proof}
 Let us consider that the defender just captured its $n$-th intruder and has consumed $T_n$ amount of time, which is a random variable.
 The total time consumed after capturing the $n+1$-th intruder is therefore
 \begin{align*}
     T_{n+1}=T_n+\tau_{{\rm eng},\,\, (n+1)}+\tau_{{\rm cap},\,\, (n+1)},
 \end{align*}
 where $\tau_{{\rm eng},\,\, (n+1)}$ is the time spent before the $n+1$-th full information phase starts and $\tau_{{\rm {{cap}}},\,\, (n+1)}$ is the time to capture the intruder as soon as both agents engage (i.e., the duration of the \textit{full information phase}).
 The latter is upper bounded by $\nicefrac{\ra}{1-\nu}$.
 Therefore we may write
 \begin{align*}
       T_{n+1} &\le T_n+\tau_{{\rm eng},\,\, (n+1)}+\frac{\ra}{1-\nu} \le T_1+n\frac{\ra}{1-\nu}+\sum\nolimits_{i=2}^{n+1} \tau_{{\rm eng},\,i}. 
 \end{align*}
 We notice that the time consumed for capturing the first intruder is a deterministic quantity
 $T_1 = \frac{\rt}{\nu}-\frac{\ra}{1+\nu}$.
 
 The number of arrived intruders by time $T_{n+1}$ is $\lfloor\frac{T_{n+1}}{T}\rfloor$.
 Therefore, the expected capture fraction after capturing the $(n+1)$-th one can be written as
\begin{align}
c_{n+1}= \mathbb{E}\Biggl[\frac{n+1}{\lfloor\frac{T_{n+1}}{T}\rfloor}\Biggr],
\end{align}
which can be further simplified to 
\begin{align*}
    c_{n+1} \geq \mathbb{E}\Biggl[\frac{n+1}{\frac{T_{1}}{T}+\frac{n}{T}\frac{\ra}{1-\nu}+\frac{1}{T}\sum\nolimits_{i=2}^{n+1} \tau_{{\rm eng},\,i} - \frac{\epsilon}{T}}\Biggr],
\end{align*}
where $\epsilon \in [0,T)$ is a random variable such that $\nicefrac{(T_{n+1}-\epsilon)}{T}$ is an integer. 
Using Jensen's inequality, we may write 
\begin{align}
    c_{n+1} &\geq \Biggl[\frac{n+1}{\frac{T_{1}}{T}+\frac{n}{T}\frac{\ra}{1-\nu}+\frac{1}{T}\sum\nolimits_{i=2}^{n+1} \mathbb{E}[\tau_{{\rm eng},\,i}] - \frac{\mathbb{E}[\epsilon]}{T}}\Biggr] \nonumber \\
    &\geq \Biggl[\frac{n+1}{\frac{T_{1}}{T}+\frac{n}{T}\frac{\ra}{1-\nu}+\frac{n}{T}\tau_{\rm avg} - \frac{\mathbb{E}[\epsilon]}{T}}\Biggr], \label{eq:cn}
\end{align}
where $\tau_{\rm avg}$ is an upper bound to $\mathbb{E}[\tau_{{\rm eng},\,i}]$, which is given in Lemma~\ref{lem:tauAvg}.
Taking the limit $n \to \infty$, we obtain
\begin{align*}
     \lim_{n \to \infty} c_{n+1} \ge \frac{T (1-\nu)}{\ra+(1-\nu)\tau_{\rm avg}} = c_\infty.
\end{align*}
 \end{proof}
\begin{remark}
Lemma~\ref{lem:lowerBound} provides a lower bound for both finite $n$ as well as when $n\to \infty$. 
As seen from \eqref{eq:cn}, the effects of $T_1$ and $\epsilon$ become negligible as $n$ increases. 
This behavior will also be noticed in the simulation (see Fig.~\ref{compare}) where the capture fraction has a transient behavior that stabilizes as the game progresses.
\hfill $\triangle$
\end{remark}

The following lemma provides the upper bound $\tau_{\rm avg}$.
 \begin{lemma} \label{lem:tauAvg}
 $\mathbb{E}\big[\tau_{{\rm eng},\,i} \big]$ is upper bounded by $\tau_{\rm avg}$, where 
 \begin{align*}
  \tau_{\rm avg}=&\po \taue^*+(1-\po) \sum\nolimits_{j=1}^\ell \prod\nolimits_{i=1}^{j-1} (1-p_i)p_{j} (\taue^*+jT) 
 \end{align*}
{where $\ell = \Big\lceil\frac{2\ro+2\gamma\ra-\frac{\rt}{\nu}}{T}\Big\rceil$ and $\prod_{i=1}^0 \equiv 1$.} \hfill $\triangle$
 \end{lemma}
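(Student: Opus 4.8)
The plan is to compute $\mathbb{E}[\taue]$ by the law of total expectation, conditioning in two layers. Under the simplification adopted for this section---that the defender re-evaluates $\Omega_{\rm cap}$ only when a fresh intruder lands on the TSR boundary---every relevant intruder sits on one of the fixed orbits $o_1,\dots,o_k$, so the event that $\Omega_{\rm cap}$ already contains a capturable intruder at the instant the defender returns to the capture circle has probability exactly $\po = 1-\prod_{i=1}^{k}(1-q_i)$. I would condition first on this event, and then, on its complement, condition further on the index $j$ of the first newly-arriving intruder that turns out to be capturable. Both conditioning layers are governed by probabilities already derived above, so the work is to attach the correct time cost to each branch and bound it by $\taue^{*}$.

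First I would treat the favorable event (probability $\po$): the defender immediately engages some capturable intruder on the orbits. Here I would bound the partial-information time by $\taue^{*}$. This is the key estimate, and it rests on the monotonicity of the engagement time $\taue^{o}(r,r_{\rm A})$ of \eqref{Maxanglesep} in the intruder's radial location $r_{\rm A}$: a farther intruder affords the defender strictly more maneuvering time, so $\taue^{*}$---the engagement time of a boundary intruder with the defender on the capture circle of radius $\ro+2\gamma\ra$---dominates the engagement time of any intruder the \textit{Earliest Breach} rule actually picks (which is never farther than the boundary). This yields the contribution $\po\,\taue^{*}$.

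Next, in the complementary event (probability $1-\po$), the defender moves radially inward, spending $T$ units of time between consecutive arrivals and testing each. Since arrivals are independent and uniform and the defender's radius after $j-1$ failures is the deterministic value $\ro+2\gamma\ra-jT$, the first capturable arrival occurs at step $j$ with probability $\prod_{i=1}^{j-1}(1-p_i)\,p_j$, with $p_j=\nicefrac{\theta_{\max}(\taue^{*},\thetae^{*},\ro+2\gamma\ra-jT)}{2\pi}$ exactly the success probability recorded earlier. On that event the defender has consumed $jT$ relocating and waiting, after which the engagement costs at most $\taue^{*}$ again (same radial-monotonicity bound, now with the defender nearer the center but the new intruder on the boundary), giving $\taue\le\taue^{*}+jT$. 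Summing produces $\sum_{j=1}^{\ell}\prod_{i=1}^{j-1}(1-p_i)\,p_j\,(\taue^{*}+jT)$. The cutoff $\ell=\lceil(2\ro+2\gamma\ra-\nicefrac{\rt}{\nu})/T\rceil$ is the number of inward steps after which the defender lies within $\nicefrac{\rt}{\nu}-\ro$ of the center, where capture is guaranteed by \cite[Lemma~2]{arman}; hence $p_\ell=1$, the weights $\prod_{i=1}^{j-1}(1-p_i)p_j$ sum to one, and the series is a genuine conditional expectation. Combining the two branches gives $\mathbb{E}[\taue]\le\tau_{\rm avg}$.

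The hardest part will be making the domination by $\taue^{*}$ airtight uniformly across both branches---over the choice of orbit in the favorable case and over the reduced defender radius in the second case. I expect this to reduce to the monotonicity of $\taue^{o}(r,r_{\rm A})$ in $r_{\rm A}$ together with the $r$-independent estimate $\taue^{o}<\nicefrac{\ra}{(1+\nu)}+\nicefrac{(r_{\rm A}-\ro)}{\nu}$ noted after \eqref{Maxanglesep}. A secondary point I would verify is that restricting the capturability test to fresh arrivals---ignoring intruders already present that may enter $\Omega_{\rm cap}$ as the defender advances---can only \emph{lengthen} the waiting time, so that $\tau_{\rm avg}$ remains a valid \emph{upper} bound on $\mathbb{E}[\taue]$, which is precisely the direction needed for the lower bound in Lemma~\ref{lem:lowerBound}.
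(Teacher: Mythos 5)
Your proposal follows exactly the same decomposition as the paper's (sketch) proof: condition on whether $\Omega_{\rm cap}$ already contains an intruder (probability $\po$, cost at most $\taue^*$), and otherwise on the index $j$ of the first capturable fresh arrival (cost at most $\taue^*+jT$), with the cutoff $\ell$ coming from the guaranteed-capture radius $\nicefrac{\rt}{\nu}-\ro$. The paper omits the details for space, and the additional points you supply---monotonicity of $\taue^o$ in $r_{\rm A}$ and the weights summing to one---are the right ones to make the sketch rigorous.
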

 
 \begin{proof}
 A detailed proof of this lemma has been omitted due to page limitations.
 A sketch proof is provided below.
 If there is an intruder within $\Omega_{\rm cap}$ then the maximum time to start the engagement phase is $\taue^*$. 
 Otherwise, the defender move toward the target center and try again when a new intruder arrives $T$ amount of time later and so on until the defender reaches a radial location of $(\nicefrac{\rt}{\nu})-\ro$ or less, from where it is guaranteed to capture any intruder appearing on the TSR boundary. 
 \end{proof}

\section{Simulation Results} \label{sec:Simu}
We simulate the game with the following parameters $\ro =4$, $\ra = 1,~ \rt = 5$, and $\nu = 0.75 $. 
Under this parametric choice we conducted $100$ random trials of the game.
In each trial of  {{this Monte-Carlo experiment,}} we considered a game duration of $10^4$ units with a period of $T=2$. 
The abscissa of Fig.~\ref{compare} denotes the game duration and the ordinate denotes the capture fraction up to that time.
We have compared our three proposed strategies where recall that $w=0$ corresponds to the \textit{Earliest Breach strategy} and $w=1$ for the \textit{Nearest Agent strategy}.
We have considered $w=0.25,0.5$, and $0.75$ and simulated the \textit{Weighted Distance strategy}. 
For a short game duration  ($<10^2$), it is observed form Fig.~\ref{compare} that the defender performs the best under a \textit{Weighted Distance} strategy with weight $w=0.25$ and for a longer duration, we observe that the defender performs better with the \textit{Earliest Breach strategy}, although the difference in performance is small. 
Although one might be tempted to conclude that a lower value of $w$ is preferable, we also notice that $w=1$ performs reasonably well for both short and long horizons and outperforms $w=0$ for shorter horizons.
\begin{figure}
    \centering
    \includegraphics[width = 0.45\textwidth]{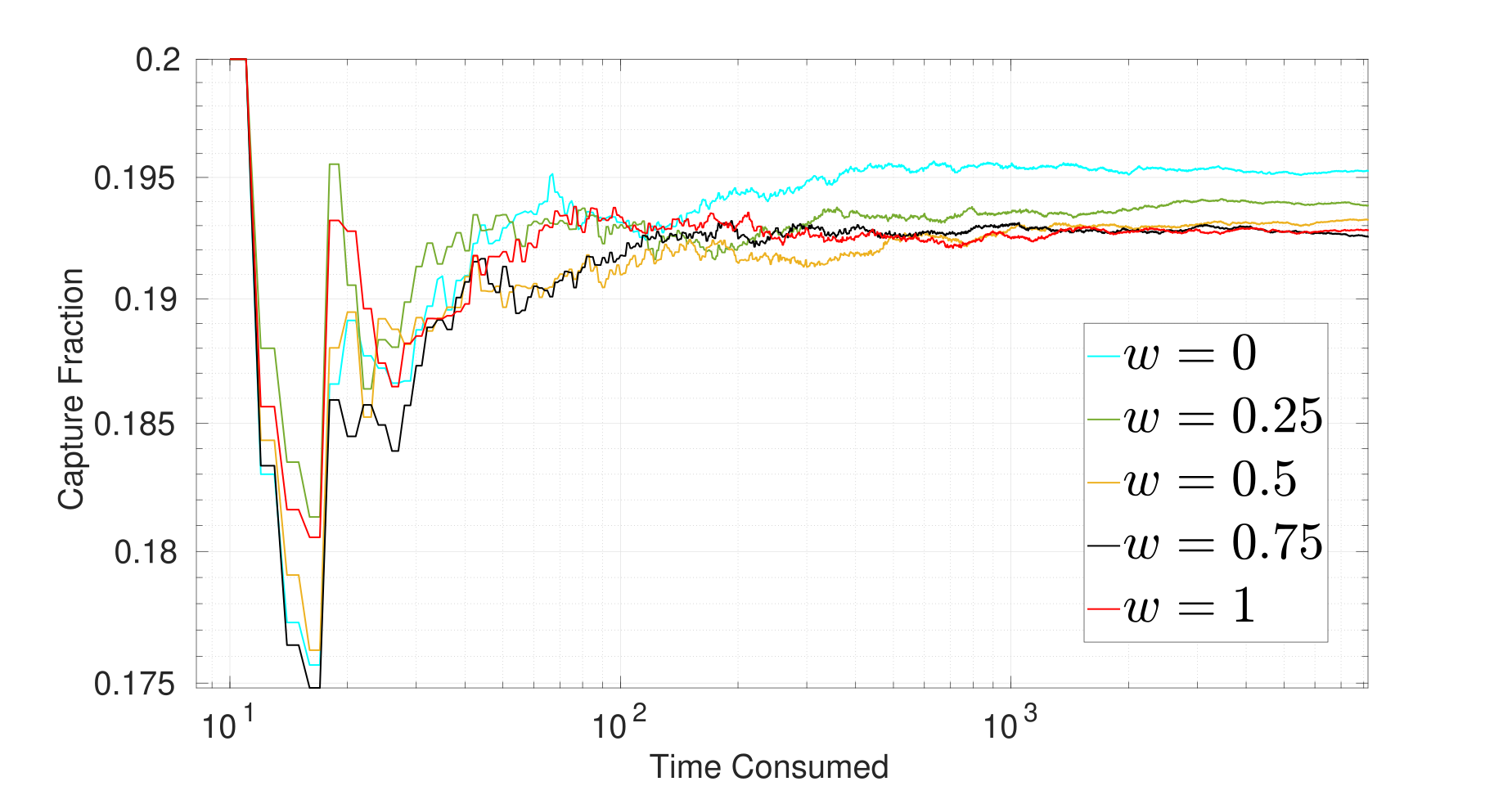}
    \caption{Capture fraction versus Game duration for $10^4$ unit time of the game. Cyan curve presented by $w=0$ is the \textit{Earliest Breach strategy} and the red curve presented by $w=1$ is the \textit{ Nearest Agent strategy}.\textit{Weighted Distance strategy} with three coefficients $w=0.25,0.5,0.75$ are shown with green, yellow and black curves respectively.}
    \label{compare}
    \vspace{-12 pt}
\end{figure}

Keeping all the other parameters the same, we now vary the period $T$ and compare the performances of the proposed algorithms, as shown in Fig.~\ref{periodcompare}.
In addition, the lower bound on on the \textit{Earliest Breach} strategy, as derived in Lemma~\ref{centerlowbound}, is also plotted in this figure. 
For small (integer) periods starting from 1 to 4, the defender captured more utilizing the \textit{Earliest Breach strategy}. 
For $T=5$, the \textit{Weighted Distance strategy} with $w=0.75$ has the highest capture fraction and for $T=6,7$ and $8$ the defender captured more using the \textit{Nearest Agent strategy}. 
From the period $T=9$ the capture fraction utilizing any of the algorithms converged to a same number. 
For $T \ge 13$ the defender is able to capture all the arriving intruders and hence the capture fraction became 1.     

\subsection{Robotic Operating System Experiments} We conducted a simulation in the Robotics Operating System (ROS) for $10$ incoming intruders for a game duration of $20$ unit.
The arrival period $T$ was chosen to be $2$.
To compare our two strategies (\textit{Earliest Breach} and \textit{Nearest Agent}), we considered the exact scenario including the arrival points of the intruders.
The defender performed differently for the two strategies.
The first intruder is captured the first by both the strategies. 
Afterwards,  the \textit{Earliest Breach strategy} picked the fifth intruder, which was captured next. 
Next, that strategy picked the ninth intruder but the capture was not completed within the game duration or 20 time unit. 
For the \textit{Nearest Agent strategy}, defender was able to capture both the sixth and eighth arriving intruders.
Then the algorithm picked the tenth one, however, before the defender could make noticeable progress, the game duration ended.
The trajectories of the defender for the \textit{Nearest Agent strategy} and the \textit{Earliest Breach strategy} are demonstrated in Fig.~\ref{gazebo}.\footnote{ 
A short simulation video is available at {\scriptsize \url{https://drive.google.com/drive/folders/1Gz6L53BLB6qwfLYzvQDr5LeQEoXwsEya?usp=sharing}}. }
 \begin{figure}
     \centering
\includegraphics[width = 0.5\textwidth]{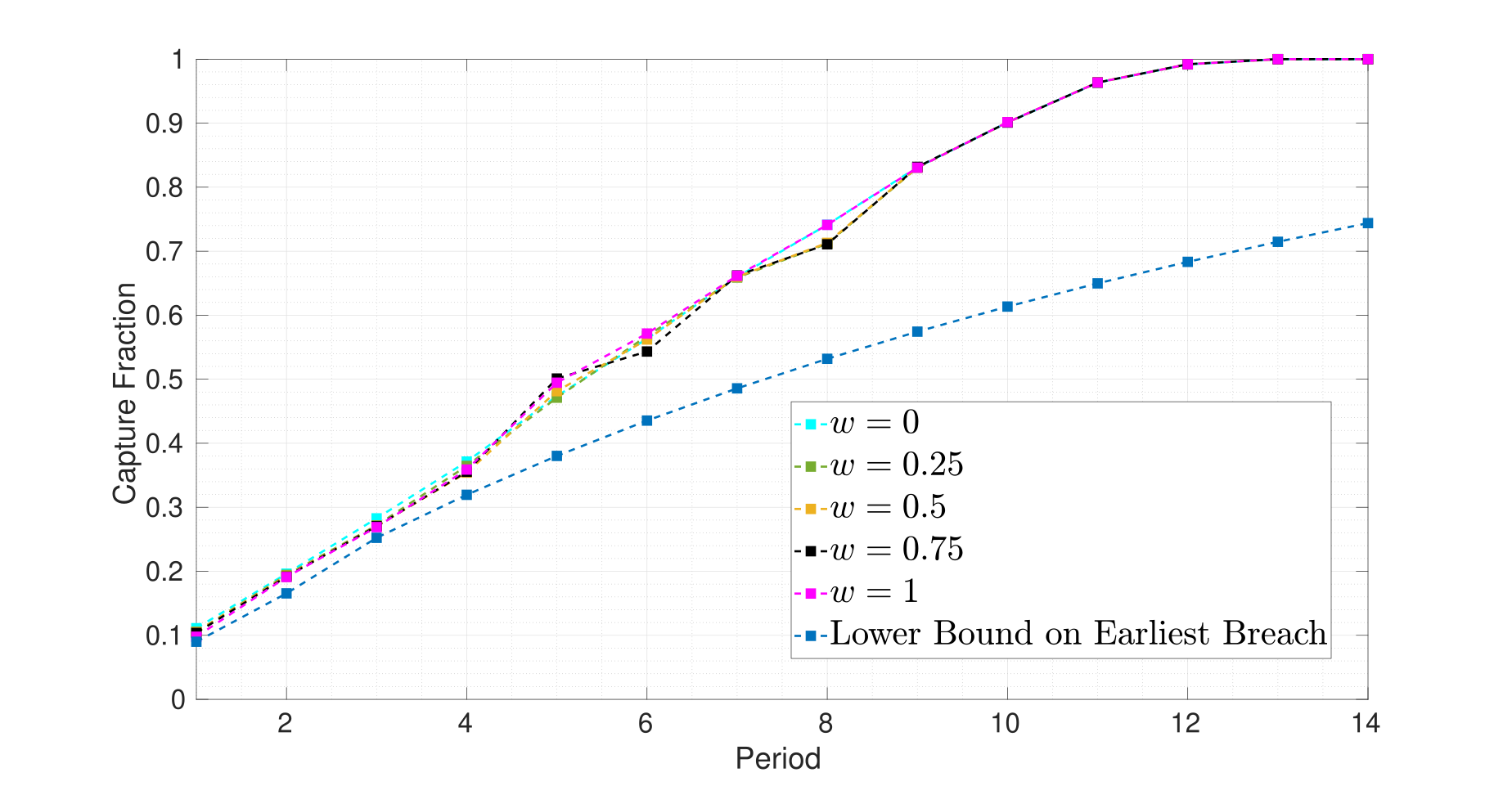}
         \caption{The capture fraction is plotted against the intruder arrival period $T$. 
         The cyan (dashed) line represents the case $w=0$, i.e., the \textit{Earliest Breach strategy} and the magenta one represents $w=1$, i.e., the \textit{Nearest Agent strategy}.
         The \textit{Weighted Distance strategy} with three coefficients $w=0.25,0.5$ and $0.75$ are shown with green, yellow, and black dashed lines respectively. 
         The derived lower bound on the capture fraction is shown in the blue dashed line.}
    \label{periodcompare}
 \end{figure}
 
\begin{figure}
    \centering
    \includegraphics[width = 0.48\textwidth]{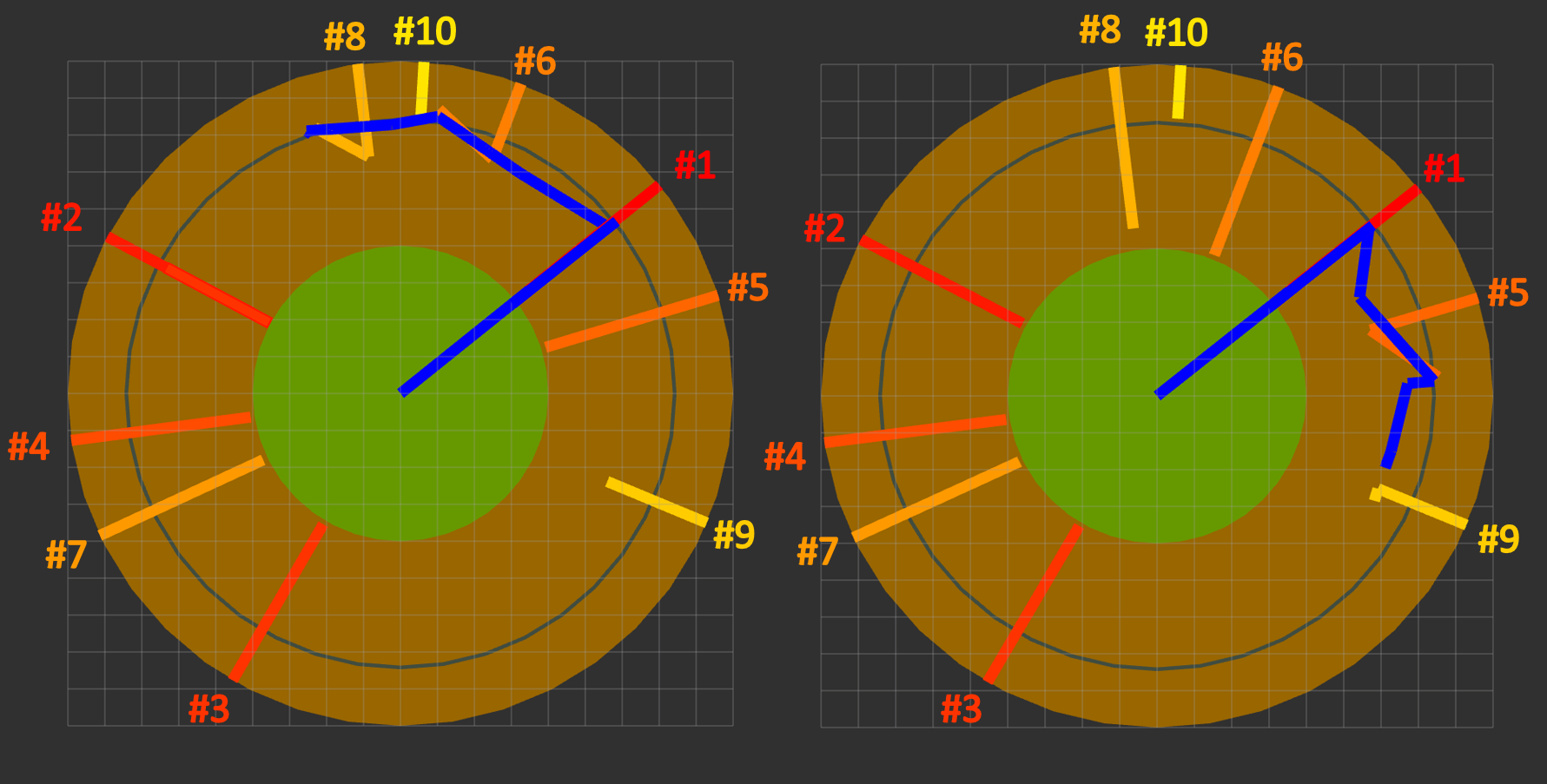}
    \caption{
    (Left) \textit{Nearest {{Agent}}}, (Right) \textit{Earliest Breach}.
    The sequence of incoming intruders is enumerated with colored numbers from 1 to 10.
    The trajectory of the defender is shown with blue lines. 
    The gray arc is the \textit{capture circle} on which all the intruders get captured. 
    }
    \vspace{- 10 pt}
\label{gazebo}
\end{figure}
\section{Conclusions} \label{sec:Conclusion}
In this paper, we formulated a target defense game against a sequence of periodically incoming intruders. 
Intruders move radially toward the target center to breach the target boundary while the defender is tasked to capture as many intruders as possible.
Based on the available information to the defender, it computes the \textit{capturability} of an intruder using the notion of \textit{Engagement Surface}, and all of the captures occur on the perimeter of a fixed circle called the \textit{Capture Circle}. 
We proposed a generalized distance-based strategy for the defender to prioritize the intruders to pursue and analytically computed a lower bound on the capture fraction. Numerical studies are presented to compare and contrast the performance over different arrival periods. 

A natural extension of this work would be to consider different arrival patterns for the intruders (e.g. non-uniform probability of arrival locations, multiple simultaneous arrivals). Furthermore, one may also consider a heterogeneous team for the intruders where different intruders may have different speed and sensing capabilities. 
In addition to these, extensions of this work to arbitrary shaped target would be of extremely useful for real-world applications. 

\bibliographystyle{IEEEtran}
\bibliography{reference}

\end{document}